\documentclass[12pt,reqno]{amsart}

\usepackage[T1]{fontenc}
\usepackage[utf8]{inputenc}
\usepackage{amsmath,amssymb,amsthm,mathtools}
\usepackage{xcolor}
\usepackage{a4wide}
\usepackage{microtype}
\usepackage{enumitem}
\usepackage{needspace}
\usepackage{hyperref}
\newtheorem{theorem}{Theorem}[section]
\newtheorem{proposition}[theorem]{Proposition}

\theoremstyle{definition}

\theoremstyle{remark}

\newcommand{\R}{\mathbb{R}}

\title[Strong Lipschitz Triviality]{On Strong Bi-Lipschitz Triviality of Deformations}
\author{Debomita Chakraborty and Saurabh Trivedi}
\date{}
\address{Indian Institute of Technology Goa, at GEC Campus, Farmagudi, Ponda Goa 403110}
\email{debomita23232201@iitgoa.ac.in}

\address{Indian Institute of Technology Goa, at GEC Campus, Farmagudi, Ponda Goa 403110}
\email{saurabh@iitgoa.ac.in}

\begin{document}
\setlength\parindent{0pt}
\parskip .2cm
\maketitle

\begin{abstract}
The Thom-Levine theorem is a classical method for proving the triviality of deformations by integrating suitable vector fields. In this note, we show that the converse of this criterion does not hold in the bi-Lipschitz category. More precisely, we give an example of a smooth one-parameter deformation of a real function germ in two variables which is bi-Lipschitz trivial but not strongly bi-Lipschitz trivial. The central germ has an isolated critical point, though it is not finitely determined.
\end{abstract}
\section{Introduction}

Let $f : (\R^n,0) \to (\R,0)$ be a smooth function germ. A one-parameter deformation of $f$ is a smooth germ
$$F : (\R \times \R, 0) \to (\R,0)$$
such that $F(x,0) = f(x).$ Writing $f_t(x) = F(x,t)$, we also assume $F(0,t) = 0$. That is, every $f_t$ is a function germ at the origin.

The deformation $F$ is said to be bi-Lipschitz trivial if there exists a bi-Lipschitz homeomorphism germ $$H : (\R^n \times \R,0)\to (\R^n \times \R,0),$$
given by $H(x,t) = (h_t(x),t)$, $h_0(0) = 0$, $h_0 = id$, such that 
$F(h_t(x),t)=f(x).$
Thus, all germs $f_t$ are simultaneously bi-Lipschitz right-equivalent to $f$. 

The Lipschitz version of the Thom-Levine theorem, see for example \cite{Levine1971,Martinet1982}, states the following:

Suppose there exists a vector field
$$X = \frac{\partial}{\partial t} + \sum_{i=1}^n v_i(x,t) \frac{\partial}{\partial x_i},$$
where the $v_i$ are locally Lipschitz in $x$, uniformly with respect to $t$, continuous in $t$, and satisfy $v_i(0,t) = 0$, such that 
$$X.F = \frac{\partial F}{\partial t} + \sum_{i=1}^n v_i(x,t) \frac{\partial F}{\partial x_i} = 0.$$
Then, $F$ is bi-Lipschitz trivial. 

In fact, the Lipschitz vector field can be integrated to get a unique local flow $h_t$, and
$\frac{d}{dt} F(h_t(x),t) = V.F(h_t(x),t) = 0.$
Hence
$$F(h_t(x),t) = F(x,0) = f(x).$$
The bi-Lipschitz trivialization obtained by integrating such a vector field is called strong bi-Lipschitz triviality. 

This Lipschitz analogue of the Thom-Levine Theorem has become a useful tool for proving bi-Lipschitz triviality. In particular, this method was used by Fernandes and Ruas to establish the bi-Lipschitz determinacy of quasihomogeneous function germs \cite{FernandesRuas2004}, by Costa, Saia and Soares Júnior to obtain bi-Lipschitz ($\mathcal A$)-triviality results using Newton filtrations \cite{CostaSaiaSoares2012}, and by Nguyen, Ruas and Trivedi in the classification of Lipschitz simple and Lipschitz unimodal function germs \cite{NguyenRuasTrivedi2020,NRT2026}; see also \cite{NguyenTrivedi2022} for a detailed account of this method.

In this article, we will show that the converse of the Thom–Levine theorem does not hold in general, that is the strong bi-Lipschitz triviality of a deformation does not imply strong bi-Lipschitz triviality. We will present a smooth deformation of a smooth function constructed using a flat function that is bi-Lipschitz trivial, but for which there is no Lipschitz vector field whose flow provides a trivialization. The proposed counterexample is, in fact, a deformation of an isolated singularity; however, it is not finitely determined. It remains unknown whether the converse holds for finitely determined germs.

\section{The counterexample}

Let
$$\rho(u) = \begin{cases}
    e^{-\frac{1}{u^2}},& \qquad u \neq 0\\
    0, &\qquad u = 0.    
\end{cases}$$
Let $0 < \varepsilon < 1,$ and define
$$
\kappa(x,t)=
\begin{cases}
\displaystyle x+\varepsilon\frac{tx}{\sqrt{x^{2}+t^{2}}},
& (x,t)\neq(0,0),\\
0, & (x,t)=(0,0),
\end{cases}
$$

Consider the deformation $F : (\R^2 \times \R,0) \to (\R,0)$ defined by
$$F(x,y,t) = \rho(\kappa(x,t)) + y^2.$$
Then, $F(x,y,0) = \rho(\kappa(x,0)) + y^2 = \rho(x) + y^2 = f_0$.

It is easy to see that for every fixed $t$, $F(x,y,t)$ has an isolated singularity at $(0,0)$. However, it is not finitely-determined.

\begin{proposition}
The deformation $F$ is a smooth.
\end{proposition}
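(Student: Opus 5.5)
The term $y^2$ is polynomial, so it suffices to show that $G(x,t)=\rho(\kappa(x,t))$ is $C^\infty$ on a neighbourhood of the origin in $\R^2$. Away from $(0,0)$, $\kappa$ is a composition of smooth functions, since $\sqrt{x^2+t^2}>0$ there. Hence $G$ is smooth on $\R^2\setminus\{(0,0)\}$, and the only issue is the origin.

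The plan is to use the standard flat-function argument. First note that $\kappa(x,t)=x\,\bigl(1+\varepsilon\, t/\sqrt{x^2+t^2}\bigr)$. Since $|t|/\sqrt{x^2+t^2}\le 1$ and $0<\varepsilon<1$, this gives the two-sided estimate
$$(1-\varepsilon)|x|\le|\kappa(x,t)|\le(1+\varepsilon)|x|.$$
In particular $\kappa$ vanishes exactly on the line $x=0$, and $G=0$ there.

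Next, on $\{x\neq 0\}$ we expand the derivatives by the chain rule (Faà di Bruno). Each partial derivative $\partial^\alpha G$ is a finite sum of terms $\rho^{(k)}(\kappa)\prod_j \partial^{\beta_j}\kappa$. We use two bounds.
\begin{itemize}[leftmargin=*]
\item[(i)] $\rho^{(k)}(u)=P_k(1/u)e^{-1/u^2}$ for a polynomial $P_k$. Combined with the lower bound on $|\kappa|$, this gives
$$|\rho^{(k)}(\kappa)|\le C_{k,N}\,|x|^{N}\quad\text{for every } N.$$
More precisely, it is bounded by $C\,|x|^{-m}e^{-1/((1+\varepsilon)^2x^2)}$.
\item[(ii)] $\kappa$ is positively homogeneous of degree $1$, so $\partial^\beta\kappa$ is homogeneous of degree $1-|\beta|$. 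Hence
$$|\partial^\beta\kappa|\le C_\beta\, r^{1-|\beta|},\qquad r=\sqrt{x^2+t^2}.$$
\end{itemize}

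The main obstacle is that the bound in (ii) involves $r$ and not $|x|$: near the $t$-axis with $t\neq0$, $r$ is not small compared to $|x|$. There, however, the derivatives of $\kappa$ are bounded, so this is harmless away from the origin. The delicate regime is $r\to0$, where $r^{1-|\beta|}$ blows up.

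The fix is to use $|x|\le r$ together with the exponential factor $e^{-1/((1+\varepsilon)^2x^2)}$, which is at most $e^{-c/x^2}$. This factor beats any negative power of $|x|$, but we need to beat powers of $r$, and $r\ge|x|$ goes the wrong way. I would handle this by splitting into two cases.
\begin{itemize}[leftmargin=*]
\item[(a)] If $|x|\ge r^{2}$, then $r^{-m}\le |x|^{-m/2}$, which is absorbed by $e^{-c/x^2}$.
\item[(b)] If $|x|< r^2$, then $|x|<|t|$ essentially, and one must re-examine the bound on $\partial^\beta\kappa$ directly. Write $\kappa=x\,\phi(x,t)$ with $\phi$ homogeneous of degree $0$. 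Then $\partial^\beta\kappa$ involves $x\,\partial^\beta\phi$ plus $\partial^{\beta'}\phi$ terms with $|\partial^\gamma\phi|\le C r^{-|\gamma|}$. Since $r\ge|t|$ and we are in the region $|x|\ll|t|$, we still get bounds $C|x|^{-|\beta|}$.
\end{itemize}
Alternatively, one can avoid the case split entirely by using the crude bound $r^{-k}\le|x|^{-k}$, which is valid since $r\ge|x|$. In either case every derivative is bounded by
$$C\,|x|^{-M}e^{-c/x^2}\to0 \quad\text{as } x\to0,$$
uniformly in $t$.

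Finally, an induction on $|\alpha|$ shows that $G$ is $C^\infty$ with all derivatives vanishing on $\{x=0\}$. At each step one checks from the difference quotients that the partial derivative of order $|\alpha|$ exists at points of $\{x=0\}$ and equals $0$; this uses $|\partial^{\alpha'}G|\le C|x|^{N}$ for the previous order $\alpha'$. Continuity of these derivatives then follows from the uniform bound above. Therefore $G$, and hence $F$, is smooth.
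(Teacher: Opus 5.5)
Your proof is correct and is essentially the paper's argument. Both use the Fa\`a di Bruno expansion of $D^\alpha G$, the homogeneity bound $|D^\beta\kappa|\le C_\beta r^{1-|\beta|}$, flatness of $\rho$, and induction on the order via difference quotients. The difference is that you convert everything into powers of $|x|$, which gives flatness along the whole line $x=0$. The paper instead uses $|\kappa|\le(1+\varepsilon)r$ to get $|\rho^{(j)}(\kappa)|\le C r^N$ and only has to treat the origin.

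The ``obstacle'' you describe is not real. Since $r\ge|x|$, one has $r^{-k}\le|x|^{-k}$, which is exactly your crude bound. Your case (a) is also wrong as written: from $|x|\ge r^2$ one gets $r\le|x|^{1/2}$, hence $r^{-m}\ge|x|^{-m/2}$, the opposite inequality. You should drop the case split and keep only the crude bound.
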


\begin{proof}
Note that $\kappa$ is smooth away from $(0,0)$. Therefore, $G(x,t)=\rho(\kappa(x,t))$, being a composition of two smooth functions, is smooth everywhere except possibly at $(0,0)$. Moreover $\kappa$ is continuous at $(0,0)$. This can be seen as follows:
Since, $$|\kappa(x,t) \leq |x| + \varepsilon \frac{|tx|}{\sqrt{x^2+t^2}} \leq (1+\varepsilon)|x|,$$
$\kappa(x,t) \to 0 = \kappa(0,0)$ as $(x,t) \to (0,0).$ 

However, $\kappa$ is not differentiable at $(0,0)$. This can be seen as follows:

We will take limits approaching $(0,0)$ along the line $x = t$. Note that,
$$\kappa(h,h) = h + \varepsilon \frac{h^2}{\sqrt{2h^2}} = h+ \frac{\varepsilon}{\sqrt{2}}|h|.$$
Since $\kappa(0,0) = 0,$
$$\frac{\kappa(h,h) - \kappa(0,0)}{h} = 1 + \frac{\varepsilon}{\sqrt{2}}\frac{|h|}{h}.$$
Therefore,
$$\lim_{h\to 0^+} \frac{\kappa(h,h)}{h} = 1 + \frac{\varepsilon}{\sqrt{2}}, \qquad \lim_{h\to 0^-} \frac{\kappa(h,h)}{h} = 1 - \frac{\varepsilon}{\sqrt{2}}.$$
The limits differ when $\varepsilon \neq 0$. Thus, the directional derivative along $(1,1)$ does not exist, so $\kappa$ is not differentiable at $(0,0)$.

We will show, nevertheless, that $G$ is smooth everywhere. The only point where we need to test the smoothness is at the origin.

Put $z = (x,t)$ and $r = \|z\|.$ The function $\kappa$ is smooth on $\R^2\setminus\{0\}$ and it is also positively homogeneous of degree $1$, that is, $\kappa(\lambda z) = \lambda \kappa(z)$ for any $\lambda > 0$. Therefore, for every multi-index $\beta=(\beta_1,\beta_2)$ with $|\beta|=\beta_1+\beta_2\geq 1$, the derivative $D^\beta\kappa$ is positively homogeneous of degree $1-|\beta|$. Here

$$
D^\beta=\frac{\partial^{|\beta|}}{\partial x^{\beta_1}\partial t^{\beta_2}}.
$$

Indeed, differentiating the identity $\kappa(\lambda z)=\lambda\kappa(z)$ gives

$$
D^\beta\kappa(\lambda z)=\lambda^{1-|\beta|}D^\beta\kappa(z).
$$

Writing $z=r\omega$, where $r=\|z\|$ and $|\omega|=1$, we obtain

$$
D^\beta\kappa(z)=r^{1-|\beta|}D^\beta\kappa(\omega).
$$

Since $D^\beta\kappa$ is continuous on the unit circle, it is bounded there. Hence, for every $\beta$, there is a constant $C_\beta>0$ such that

\[
|D^\beta\kappa(z)|\leq C_\beta r^{1-|\beta|}. \tag{1}
\]

We now use the flatness of $\rho$. Recall that

$$
\rho(u)=
\begin{cases}
e^{-1/u^2},&u\neq 0,\\
0,&u=0.
\end{cases}
$$

For every integer $j\geq 0$, the derivative $\rho^{(j)}(u)$, for $u\neq 0$, is of the form

$$
\rho^{(j)}(u)=P_j(1/u)e^{-1/u^2},
$$

where $P_j$ is a polynomial. Since $e^{-1/u^2}$ tends to zero faster than every power of $|u|$, for every pair of non-negative integers $j,N$ there is a constant $C_{j,N}>0$ such that

\[
|\rho^{(j)}(u)|\leq C_{j,N}|u|^N
\tag{2}
\]

for $u$ sufficiently close to $0$.

Also, from the estimate obtained above,

\[
|\kappa(x,t)|\leq (1+\varepsilon)|x|\leq (1+\varepsilon)r.
\tag{3}
\]

Let now $\alpha$ be a multi-index with $|\alpha|=m$. On $\R^2\setminus{0}$, repeated application of the chain rule and the product rule shows that every term appearing in $D^\alpha G$ is of the form

\[
\rho^{(j)}(\kappa(z))
D^{\beta_1}\kappa(z)\cdots D^{\beta_j}\kappa(z),
\tag{4}
\]

where each $\beta_i$ is a non-zero multi-index and

$$
|\beta_1|+\cdots+|\beta_j|=m.
$$

Using (1), we have

$$
\left|D^{\beta_1}\kappa(z)\cdots D^{\beta_j}\kappa(z)\right|
\leq C r^{j-m}.
$$

On the other hand, by (2) and (3), for every $N\geq 0$,

$$
|\rho^{(j)}(\kappa(z))|
\leq C|\kappa(z)|^N
\leq Cr^N.
$$

Consequently, every term in (4) is bounded by

$$
Cr^{N+j-m}.
$$

Since $N$ can be chosen arbitrarily large, it follows that, for every multi-index $\alpha$ and every integer $L\geq 0$, there is a constant $C_{\alpha,L}>0$ such that

\[
|D^\alpha G(z)|\leq C_{\alpha,L}r^L
\tag{5}
\]

for $z\neq 0$ sufficiently close to the origin.

We now show that all derivatives of $G$ exist at the origin and vanish there. We proceed by induction on their order. For order zero, (5) gives $G(z)\to 0=G(0)$, so $G$ is continuous at the origin.

Suppose that all derivatives $D^\alpha G$ of order $m$ exist at the origin and satisfy $D^\alpha G(0)=0$. Then

$$
\frac{\partial}{\partial x}D^\alpha G(0)
=
\lim_{h\to 0}
\frac{D^\alpha G(h,0)-D^\alpha G(0)}{h}.
$$

Taking $L=2$ in (5), we have $|D^\alpha G(h,0)|\leq C|h|^2$. Therefore,

$$
\left|
\frac{D^\alpha G(h,0)}{h}
\right|
\leq C|h|\longrightarrow 0.
$$

Thus the derivative with respect to $x$ exists at the origin and is zero. The same argument, using the points $(0,h)$, proves the same for the derivative with respect to $t$. Hence all derivatives of order $m+1$ exist at the origin and vanish there.

Finally, taking $L=1$ in (5), we obtain
$$
|D^\alpha G(z)|\leq C_{\alpha,1}r\longrightarrow 0=D^\alpha G(0).
$$
Thus every partial derivative of $G$ is continuous at the origin. Hence $G$ is smooth at $(0,0)$, and therefore smooth everywhere.

Since
$$
F(x,y,t)=G(x,t)+y^2,
$$
and $y^2$ is smooth, the deformation $F$ is smooth.

\end{proof}

We will now show that $F$ is bi-Lipschitz trivial.
\begin{proposition}
The deformation
\[
F(x,y,t)=\rho(\kappa(x,t))+y^{2}
\]
is bi-Lipschitz trivial.
\end{proposition}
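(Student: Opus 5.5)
The trivialization will be written down explicitly: take $h_t(x,y)=(\kappa(x,t),y)$ and $H(x,y,t)=(\kappa(x,t),y,t)$. Then
\[
F(x,y,t)=\rho(\kappa(x,t))+y^2=f(\kappa(x,t),y),
\]
so $f\circ h_t=f_t$. The paper's definition asks for $F(h_t(x),t)=f(x)$, so we should instead use the inverse family $\tilde h_t=h_t^{-1}$, which gives $F(\tilde h_t(x,y),t)=f(x,y)$. Once $H$ is shown to be a bi-Lipschitz homeomorphism germ, $H^{-1}(x,y,t)=(\tilde h_t(x,y),t)$ is one too, with $\tilde h_0=\mathrm{id}$ because $\kappa(x,0)=x$. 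So the proposition reduces to showing that $H$ is bi-Lipschitz near the origin.

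\textbf{$H$ is Lipschitz.} The function $\kappa$ is positively homogeneous of degree $1$ and smooth off the origin. By estimate (1) with $|\beta|=1$, its first partial derivatives are bounded by a constant on $\R^2\setminus\{0\}$. A continuous function on $\R^2$ that is smooth off a point and has bounded gradient there is globally Lipschitz. To see this, join two points by a segment; if the segment passes through the origin, split it there, or perturb it slightly and use continuity. Hence $\kappa$, and therefore $H$, is Lipschitz.

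\textbf{$H$ is a homeomorphism with Lipschitz inverse.} For fixed $t$, write $\kappa(x,t)=x\,\phi(x,t)$ with
\[
\phi(x,t)=1+\frac{\varepsilon t}{\sqrt{x^2+t^2}}\in[1-\varepsilon,1+\varepsilon].
\]
We want a lower bound $\partial_x\kappa\ge c>0$ off the origin. A direct computation gives
\[
\partial_x\kappa=1+\frac{\varepsilon t}{\sqrt{x^2+t^2}}-\frac{\varepsilon t x^2}{(x^2+t^2)^{3/2}}=1+\frac{\varepsilon t^3}{(x^2+t^2)^{3/2}}\ \ge\ 1-\varepsilon>0.
\]
So for each fixed $t$, the map $x\mapsto\kappa(x,t)$ is a strictly increasing bijection of $\R$, since $\kappa\to\pm\infty$ as $x\to\pm\infty$ because $\phi\ge1-\varepsilon$. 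It is continuous, and its inverse is $(1-\varepsilon)^{-1}$-Lipschitz in $x$; at $t=0$ it is the identity. Let $\lambda(u,t)$ denote the inverse, so $\kappa(\lambda(u,t),t)=u$. We now check that $\lambda$ is Lipschitz jointly in $(u,t)$. The map $(x,t)\mapsto(\kappa(x,t),t)$ is a homeomorphism of $\R^2$ that is smooth off the origin. Its Jacobian matrix there is triangular with diagonal entries $\partial_x\kappa\ge1-\varepsilon$ and $1$, and off-diagonal entry $\partial_t\kappa$ bounded by (1). So the inverse Jacobian is bounded by a constant depending only on $\varepsilon$. The inverse map is smooth off the origin, by the inverse function theorem, and continuous, so the same segment argument as before makes it globally Lipschitz. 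Adding the $y$-coordinate trivially shows that $H^{-1}$ is Lipschitz.

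The main obstacle is the lower bound $\partial_x\kappa\ge1-\varepsilon$; the displayed simplification above handles it, and this is exactly where the hypothesis $\varepsilon<1$ is used. The other point needing care is the Lipschitz estimate across the singular point $(0,0)$. There we rely on continuity together with the fact that a single point does not obstruct the mean value argument in dimension $2$. Finally, $H(0,0,t)=(0,0,t)$ because $\kappa(0,t)=0$, so each $\tilde h_t$ fixes the origin and the trivialization is by germs at $0$.
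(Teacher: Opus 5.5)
Your argument is correct. It uses the same trivialization $(x,y,t)\mapsto(\kappa(x,t),y,t)$ and the same key bound $\partial_x\kappa=1+\varepsilon t^3/(x^2+t^2)^{3/2}\ge 1-\varepsilon$ as the paper, but you obtain the Lipschitz estimates differently.

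\textbf{How the paper does it.} The paper avoids oblique segments altogether.
\begin{itemize}
\item It applies the one-variable mean value theorem along lines $t=\mathrm{const}\neq0$ and $x=\mathrm{const}\neq0$, which miss the origin.
\item It handles $t=0$ and $x=0$ by the explicit formulas $\kappa(x,0)=x$ and $\kappa(0,t)=0$.
\item It joins two points through the corner $(x_2,t_1)$.
\item It computes $\partial_t\kappa=\varepsilon x^3/(x^2+t^2)^{3/2}$ explicitly, so $|\partial_t\kappa|\le\varepsilon$ without any appeal to the homogeneity estimate.
\item For the inverse it derives directly $(1-\varepsilon)|x_1-x_2|\le|u_1-u_2|+\varepsilon|t_1-t_2|$. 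This gives the Lipschitz bound for the inverse without knowing that the inverse is smooth anywhere.
\end{itemize}

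\textbf{Comparison.} Your route has two steps. First, bounded gradient off one point plus continuity gives Lipschitz. Second, the inverse function theorem together with the uniformly bounded inverse Jacobian $\left(\begin{smallmatrix}1/\kappa_x & -\kappa_t/\kappa_x\\ 0 & 1\end{smallmatrix}\right)$ handles the inverse. This is more conceptual and works for any map that is smooth off the origin with uniformly invertible derivative. The paper's route is more elementary and gives explicit constants $1+\varepsilon$, $\varepsilon$, $1/(1-\varepsilon)$ and $\varepsilon/(1-\varepsilon)$.

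\textbf{One step to tighten.} You assert rather than prove that $(x,t)\mapsto(\kappa(x,t),t)$ is a homeomorphism. What you actually need follows at once. Off the origin, global injectivity makes the local inverses from the inverse function theorem agree with the global inverse, so the global inverse is smooth there. At the origin, the bound $|\lambda(u,t)|\le|u|/(1-\varepsilon)$ gives continuity.
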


\begin{proof}
Define the parameter-preserving map
\[
\Phi(x,y,t)=\bigl(\kappa(x,t),y,t\bigr).
\]
We first prove that \(\Phi\) is bi-Lipschitz.

Away from \((x,t)=(0,0)\), direct differentiation gives
\[
\frac{\partial\kappa}{\partial x}
=
1+\varepsilon
\frac{t^{3}}{(x^{2}+t^{2})^{3/2}},
\qquad
\frac{\partial\kappa}{\partial t}
=
\varepsilon
\frac{x^{3}}{(x^{2}+t^{2})^{3/2}}.
\]
Set \(e=|\varepsilon|<1\). Since
\[
\left|
\frac{t^{3}}{(x^{2}+t^{2})^{3/2}}
\right|
\leq 1,
\qquad
\left|
\frac{x^{3}}{(x^{2}+t^{2})^{3/2}}
\right|
\leq 1,
\]
we obtain
\[
1-e
\leq
\frac{\partial\kappa}{\partial x}
\leq
1+e,
\qquad
\left|
\frac{\partial\kappa}{\partial t}
\right|
\leq e.
\]

Consequently, for every fixed \(t\),
\[
(1-e)|x_{1}-x_{2}|
\leq
|\kappa(x_{1},t)-\kappa(x_{2},t)|
\leq
(1+e)|x_{1}-x_{2}|.
\tag{1}
\]
When \(t=0\), this follows from \(\kappa(x,0)=x\); when
\(t\neq0\), it follows from the mean value theorem. Similarly,
for every fixed \(x\),
\[
|\kappa(x,t_{1})-\kappa(x,t_{2})|
\leq
e|t_{1}-t_{2}|.
\tag{2}
\]
For \(x=0\), this is immediate because \(\kappa(0,t)=0\); for
\(x\neq0\), it follows from the mean value theorem.

Combining \((1)\) and \((2)\), we obtain
\begin{align*}
|\kappa(x_{1},t_{1})-\kappa(x_{2},t_{2})|
&\leq
|\kappa(x_{1},t_{1})-\kappa(x_{2},t_{1})| \\
&\quad+
|\kappa(x_{2},t_{1})-\kappa(x_{2},t_{2})| \\
&\leq
(1+e)|x_{1}-x_{2}|+e|t_{1}-t_{2}|.
\end{align*}
Therefore the map
\[
(x,t)\longmapsto \bigl(\kappa(x,t),t\bigr)
\]
is Lipschitz.

For each fixed \(t\), inequality \((1)\) shows that
\(x\mapsto\kappa(x,t)\) is strictly increasing. Moreover,
\[
|\kappa(x,t)|\geq(1-e)|x|,
\]
and therefore \(\kappa(x,t)\to\pm\infty\) as \(x\to\pm\infty\).
Thus \(x\mapsto\kappa(x,t)\) is a homeomorphism of
\(\mathbb{R}\). Denote its inverse by
\[
x=\psi(u,t).
\]

To prove that the inverse depends Lipschitz continuously on both
\(u\) and \(t\), let
\[
u_i=\kappa(x_i,t_i), \qquad i=1,2.
\]
Using \((1)\) and \((2)\), we obtain
\begin{align*}
(1-e)|x_{1}-x_{2}|
&\leq
|\kappa(x_{1},t_{1})-\kappa(x_{2},t_{1})| \\
&\leq
|\kappa(x_{1},t_{1})-\kappa(x_{2},t_{2})| \\
&\quad+
|\kappa(x_{2},t_{2})-\kappa(x_{2},t_{1})| \\
&\leq
|u_{1}-u_{2}|+e|t_{1}-t_{2}|.
\end{align*}
Hence
\[
|x_{1}-x_{2}|
\leq
\frac{|u_{1}-u_{2}|+e|t_{1}-t_{2}|}{1-e}.
\tag{3}
\]
It follows that
\[
(u,t)\longmapsto\bigl(\psi(u,t),t\bigr)
\]
is Lipschitz. Thus
\[
(x,t)\longmapsto\bigl(\kappa(x,t),t\bigr)
\]
is bi-Lipschitz.

Since \(\Phi\) leaves the \(y\)-coordinate unchanged, it follows
that
$
\Phi(x,y,t)=\bigl(\kappa(x,t),y,t\bigr)
$
is bi-Lipschitz, with inverse
$
\Phi^{-1}(u,v,t)=\bigl(\psi(u,t),v,t\bigr).
$

Finally,
\begin{align*}
F\bigl(\Phi^{-1}(u,v,t)\bigr)
&=
F\bigl(\psi(u,t),v,t\bigr) \\
&=
\rho\bigl(\kappa(\psi(u,t),t)\bigr)+v^{2} \\
&=
\rho(u)+v^{2} \\
&=
f(u,v).
\end{align*}
Therefore \(\Phi\) is a parameter-preserving bi-Lipschitz
trivialization of \(F\). Equivalently, if
$
\phi_t(x,y)=\bigl(\kappa(x,t),y\bigr),
$
then
$
F_t=f\circ\phi_t.$
Hence $F$ is bi-Lipschitz right trivial.
\end{proof}

Finally, we show that the deformation in not strongly bi-Lipschitz trivial.

\begin{proposition}
The deformation \(F\) is not strongly bi-Lipschitz trivial.
\end{proposition}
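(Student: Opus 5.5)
The plan is to argue by contradiction: restricting the homological equation $X.F=0$ to the plane $y=0$ determines the $x$-component of $X$ uniquely away from the $t$-axis, and this forced component turns out to be homogeneous of degree $0$ in $(x,t)$. A nonzero degree-$0$ function cannot satisfy $v_1(0,0,t)=0$ together with a Lipschitz (or even continuity) condition at the origin.

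Suppose, to the contrary, that there is a vector field
\[
X=\frac{\partial}{\partial t}+v_1(x,y,t)\frac{\partial}{\partial x}+v_2(x,y,t)\frac{\partial}{\partial y},
\]
with $v_1,v_2$ locally Lipschitz in $(x,y)$ uniformly in $t$, continuous in $t$, with $v_i(0,0,t)=0$, and with $X.F=0$ near the origin. Write $G(x,t)=\rho(\kappa(x,t))$ as in the proof of smoothness. Then $X.F=0$ reads
\[
G_t(x,t)+v_1(x,y,t)\,G_x(x,t)+2y\,v_2(x,y,t)=0 .
\]
Setting $y=0$, which is legitimate since the identity holds on a full neighbourhood, the $v_2$ term disappears. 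We are left with $G_t+v_1(x,0,t)G_x=0$.

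Next I use the chain rule away from $(x,t)=(0,0)$: $G_x=\rho'(\kappa)\kappa_x$ and $G_t=\rho'(\kappa)\kappa_t$. The key nonvanishing facts are:
\begin{itemize}
\item $\rho'(u)=2u^{-3}e^{-1/u^2}\neq 0$ for $u\neq 0$;
\item $|\kappa(x,t)|\ge (1-e)|x|>0$ for $x\neq 0$, as shown in the proof of bi-Lipschitz triviality;
\item $\kappa_x\ge 1-e>0$.
\end{itemize}
Hence $G_x(x,t)\neq 0$ whenever $x\neq 0$. So for $x\neq 0$ the component $v_1$ is forced:
\[
v_1(x,0,t)=-\frac{\kappa_t}{\kappa_x}=-\frac{\varepsilon x^3}{(x^2+t^2)^{3/2}+\varepsilon t^3}.
\]
This is precisely the velocity field of the explicit trivialization $\Phi$: the flat factor $\rho'$ cancels, and only the non-smooth homogeneous part of $\kappa$ remains. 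The expected main obstacle is exactly this uniqueness step, namely checking that $G_x$ really does not vanish off the $t$-axis. The facts listed above settle it.

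The forced $v_1(x,0,t)$ is positively homogeneous of degree $0$ in $(x,t)$, and it is nonzero along the ray $t=0$, where it equals $-\varepsilon\,\mathrm{sign}(x)$. The Lipschitz hypothesis in $x$ at $t=0$, together with $v_1(0,0,0)=0$, gives
\[
|v_1(x,0,0)|\le L|x| .
\]
But $|v_1(x,0,0)|=\varepsilon$ for all small $x\neq 0$, which is a contradiction as $x\to 0$. Equivalently, along the diagonal $t=x$ one finds
\[
|v_1(x,0,x)|=\frac{\varepsilon}{2^{3/2}+\varepsilon}>0 \quad\text{for } x>0,
\]
whereas the hypotheses force $|v_1(x,0,x)|\le L|x|\to 0$. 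Even mere continuity of $v_1$ at the origin fails. Therefore no such vector field exists, and $F$ is not strongly bi-Lipschitz trivial.
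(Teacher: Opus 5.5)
Your argument is correct and is essentially the paper's proof: restricting $X.F=0$ to $y=0$ and using $\rho'(\kappa)\kappa_x\neq 0$ for $x\neq 0$ forces $v_1(x,0,0)=-\varepsilon\operatorname{sgn}(x)$, which contradicts Lipschitz continuity at the origin. The paper reaches the contradiction through the jump between the one-sided limits $\mp\varepsilon$ rather than through $v_1(0,0,0)=0$, and it does not use your general formula for $v_1(x,0,t)$ or the diagonal computation; these differences are cosmetic.
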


\begin{proof}
Suppose, on the contrary, that \(F\) is strongly bi-Lipschitz right
trivial. Then there exist Lipschitz function germs \(a\) and \(b\)
satisfying the equation
\(F_t+aF_x+bF_y=0\).

Consider the points \((x,0,0)\), where \(x\neq0\). Since \(\kappa\)
depends only on \(x\) and \(t\), we have
\[
\kappa(x,0)=x,\qquad
\kappa_x(x,0)=1,\qquad
\kappa_t(x,0)=\varepsilon\operatorname{sgn}(x).
\]

Since \(F=\rho(\kappa)+y^2\), it follows that
\(F_x(x,0,0)=\rho'(x)\), \(F_y(x,0,0)=0\), and
\(F_t(x,0,0)=\varepsilon\operatorname{sgn}(x)\rho'(x)\).
Therefore the Thom--Levine equation gives
\[
\rho'(x)\bigl(\varepsilon\operatorname{sgn}(x)
+a(x,0,0)\bigr)=0.
\]
Now \(\rho'(x)=2x^{-3}e^{-1/x^2}\neq0\) for \(x\neq0\). Hence
\(a(x,0,0)=-\varepsilon\operatorname{sgn}(x)\). Consequently,
\(\lim_{x\to0^+}a(x,0,0)=-\varepsilon\), whereas
\(\lim_{x\to0^-}a(x,0,0)=\varepsilon\). Since
\(\varepsilon\neq0\), the function \(a\) is not continuous at the
origin, contradicting its Lipschitz continuity. Therefore \(F\) is
not strongly bi-Lipschitz right trivial.
\end{proof}


\begin{thebibliography}{99}

\bibitem{Levine1971}
H.~I. Levine,
\emph{Singularities of differentiable mappings},
in: Proceedings of Liverpool Singularities Symposium I,
Lecture Notes in Mathematics, vol.~192,
Springer-Verlag, Berlin, 1971, pp.~1--21.

\bibitem{Martinet1982}
J. Martinet,
\emph{Singularities of Smooth Functions and Maps},
London Mathematical Society Lecture Note Series, vol.~58,
Cambridge University Press, Cambridge, 1982.

\bibitem{FernandesRuas2004}
A.~C.~G. Fernandes and M.~A.~S. Ruas,
Bi-Lipschitz determinacy of quasihomogeneous germs,
\emph{Glasgow Math. J.} \textbf{46} (2004), no.~1, 77--82.

\bibitem{CostaSaiaSoares2012}
J.~C.~F. Costa, M.~J. Saia and C.~H. Soares J\'unior,
Bi-Lipschitz \(\mathcal A\)-triviality of map germs and Newton
filtrations,
\emph{Topology Appl.} \textbf{159} (2012), no.~2, 430--436.

\bibitem{NguyenRuasTrivedi2020}
N. Nguyen, M.~A.~S. Ruas and S. Trivedi,
Classification of Lipschitz simple function germs,
\emph{Proc. Lond. Math. Soc.} (3) \textbf{121} (2020),
no.~1, 51--82.

\bibitem{NRT2026}
N. Nguyen, M.~A.~S. Ruas and S. Trivedi,
Classification of Lipschitz Unimodal function germs,
\emph{Preprint} (2026).

\bibitem{NguyenTrivedi2022}
N. Nguyen and S. Trivedi,
Invariants and classification of simple function germs with respect
to Lipschitz \(\mathcal A\)-equivalence,
\emph{J. Singul.} \textbf{25} (2022), 348--360.

\end{thebibliography}
\end{document}